\renewenvironment{abstract}{\begin{center}\begin{minipage}{0.7\linewidth}\noindent\textsc{Abstract} \hspace{1em}}{\end{minipage}\end{center}}
\newcommand{\half}{\frac{1}{2}}
\newcommand{\threeh}{\frac{3}{2}}
\newcommand{\rk}{\mathrm{rk}\,}
\title{Computing the unknotting numbers of certain pretzel knots}
\newcommand{\dstruck}[1]{\mathbb{#1}}
\newcommand{\Zx}{\dstruck{Z}}
\newtheorem{thm}{Theorem}
   \newtheorem{lem}{Lemma}[section]
   \newtheorem{lem}{Lemma}[chapter]
\newtheorem{prp}[lem]{Proposition}
\newtheorem{dfn}[lem]{Definition}
\newtheorem{rem}[lem]{Remark}
\newtheorem{cnj}[lem]{Conjecture}
\newcommand{\coker}{\mathrm{coker}\,}
\author{Seph Shewell Brockway\\seph@undertherose.org.uk}
\begin{document}
\maketitle

\begin{abstract}
We compute the unknotting number of two infinite families of pretzel knots, $P(3,1,\dots,1,b)$ (with $b$ positive and odd and an odd number of $1$s) and $P(3,3,3c)$ (with $c$ positive and odd). To do this, we extend a technique of Owens using Donaldson's diagonalization theorem, and one of Traczyk using the Jones polynomial, building on work of Lickorish and Millett.
\end{abstract}

\section{Introduction}
\label{sec:intro}
The unknotting number $u(K)$ of a knot $K$ is the minimal number of crossing changes (whereby one strand of the knot is passed through another) required to transform $K$ into the unknot. Any diagram of $K$ can be used to compute such an unknotting sequence for $K$, and thereby place an upper bound on $u(K)$. Calculating $u(K)$ exactly, or even computing a lower bound, is in general a hard problem.

\begin{figure}\centering\includegraphics*{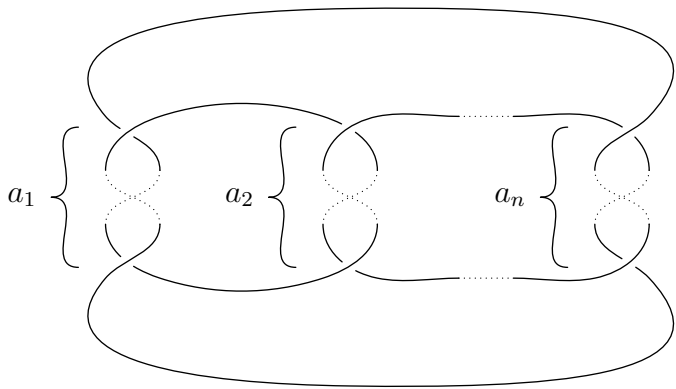}\caption{The pretzel link $P(a_1,\dots,a_n)$}\label{fig:pretz}\end{figure}

The pretzel link $P(a_1,\dots,a_n)$ with $a_i\in\Zx\setminus 0$ for all $i$ is the link shown in Figure \ref{fig:pretz}, with $a_i<0$ representing crossings in the opposite direction to that shown. Observe that $P(a_1,\dots,a_n)$ is a knot when $n$ and all of the $a_i$ are odd, and also when exactly one of the $a_i$ is even; every pretzel knot is of one of these two types.

We will make use of the knot signature $\sigma(K)$, originally defined in terms of a Seifert surface (an orientable surface embedded in $S^3$ bounded by $K$; see e.g. \cite{lickorish97}) with $\sigma(\mbox{unknot})=0$. It is well known (see e.g. \cite{cl86}) that if $K_-$ is obtained from $K_+$ by changing a positive crossing then $\sigma(K_-)-\sigma(K_+)\in\{0,2\}$, so that, for any knot $K$, $u(K)\geq\half|\sigma(K)|$. The method of Gordon and Litherland \cite{gl78} shows easily that $\sigma(P(a_1,\dots,a_n))=n-1$ whenever the $a_i$ are all positive and odd.

That $u(P(3,1,3))=2$ was established by Lickorish \cite{lickorish85}. Owens \cite{owens10} later showed that $P(3,1,3)$ could not be unknotted by changing one negative and any number of positive crossings, and in a separate paper \cite{owens08} also showed that $u(P(3,1,1,1,3))=3$.

Traczyk \cite{traczyk98} used the Jones polynomial to show that $P(3,3,3)$ could not be unknotted by changing one positive and one negative crossing. Owens \cite{owens08} used this work and an obstruction from Heegaard Floer theory to show that $u(P(3,3,3))=3$.

We extend the techniques of Owens and Traczyk to establish the following two theorems.

\begin{thm}\label{thm:ratpretz}For $K=P(3,1,\dots,1,b)$ with an odd number $r$ of $1$s and $b$ positive and odd, $u(K)=\half(r+3)$. More generally, $K=P(a,1,\dots,1,b)$, with an odd number $r$ of $1$s and $a$ and $b$ positive and odd, cannot be unknotted by changing $\half\sigma(K)=\half(r+1)$ negative and any number of positive crossings.\end{thm}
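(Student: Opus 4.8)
The plan is to deduce the equality statement from the obstruction statement, and to prove the obstruction statement by Owens's method, via the double branched cover and Donaldson's theorem. First note that for $K=P(a,1,\dots,1,b)$ as above the Gordon--Litherland computation quoted above gives $\sigma(K)=(r+2)-1=r+1$, so the signature bound yields $u(K)\geq\half(r+1)$. Since changing a positive crossing can only raise the signature (by $0$ or $2$), every unknotting sequence must change at least $\half(r+1)$ negative crossings, and any unknotting realising the value $\half(r+1)$ would have to consist of exactly $\half(r+1)$ negative changes and no positive ones. Hence the obstruction statement --- no unknotting by $\half(r+1)$ negative and any number of positive changes --- immediately gives $u(K)\geq\half(r+1)+1=\half(r+3)$ in the case $a=3$; combined with the (routine) upper bound of an explicit sequence of $\half(r+3)$ crossing changes in the standard diagram, this proves $u(P(3,1,\dots,1,b))=\half(r+3)$. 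All the content therefore lies in the obstruction statement.

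For that, let $Y=\Sigma_2(K)$ be the double cover of $S^3$ branched over $K$. The genus-$\half(r+1)$ pretzel surface has symmetrised Seifert form $V+V^{T}$ equal to the positive-definite tridiagonal lattice
\[
Q=\begin{pmatrix}
a+1 & 1 & & & \\
1 & 2 & 1 & & \\
& 1 & \ddots & \ddots & \\
& & \ddots & 2 & 1 \\
& & & 1 & b+1
\end{pmatrix}
\]
of rank $r+1$ (with $r-1$ interior diagonal entries equal to $2$), and $Y$ bounds the double cover $X$ of $B^4$ branched over the pushed-in surface, whose intersection form is $Q$. The crucial feature is that $\rk Q=r+1=\sigma(K)$, so $X$ is definite of rank equal to the signature; this extremality is what lets Donaldson's theorem bite.

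Next I would feed in the unknotting hypothesis through the Montesinos trick, following Cochran--Lickorish and Owens. An unknotting of $K$ by $\half(r+1)$ negative and $p$ positive crossing changes exhibits $Y$ as surgery on a framed link in $S^3=\Sigma_2(\text{unknot})$, hence as the boundary of a second $4$-manifold $W$ built from one $2$-handle per change, the sign of each change fixing the sign of the corresponding framing. The handles coming from the $p$ positive changes can be blown down --- this is precisely why arbitrarily many positive changes are permitted in the statement --- so after blowing down and gluing $X$ to $-W$ along $Y$ one obtains a closed definite $4$-manifold. Donaldson's diagonalization theorem identifies its intersection form with the standard diagonal lattice, and reading this back produces an embedding of $Q$ into some $\Zx^{N}$ together with $\half(r+1)$ distinguished vectors, of prescribed square and pairing, recording the negative changes.

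The main obstacle, and the heart of the argument, is the resulting lattice computation, which must be carried out uniformly in $r$ (and in $a,b$) rather than case by case. The key leverage is that each interior diagonal $2$ forces the associated basis vector to embed as a difference $e_i-e_{i+1}$ of standard generators, so the interior of $Q$ is pinned, up to the symmetries of $\Zx^{N}$, to an $A$-type chain; the two end vectors of square $a+1$ and $b+1$ then have only a short list of admissible placements. I would show that no such placement can simultaneously be orthogonal to, and of the correct square against, the $\half(r+1)$ classes forced by the negative crossing changes, yielding the contradiction. Owens established exactly this for $r=1$, $a=b=3$; the work is to upgrade that finite check to a uniform obstruction, and I expect the induction controlling the growing $A$-chain to be the delicate point.
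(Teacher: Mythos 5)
There is a genuine gap, and it sits exactly where you place it: the lattice computation you defer (``I would show that no such placement can \dots'', ``the work is to upgrade that finite check to a uniform obstruction'') \emph{is} the theorem, and in your formulation it is not clear it can be done uniformly at all. Your end vectors have square $a+1$ and $b+1$, which admit rapidly growing families of embeddings into $\Zx^N$ as $a,b$ grow, so a placement-by-placement rigidity analysis plus induction on the $A$-chain is exactly the case-by-case explosion one wants to avoid. The paper sidesteps this entirely: instead of the symmetrized Seifert form it fills $Y$ with the Gordon--Litherland manifold whose form is the Goeritz lattice of the standard diagram --- rank $a+b-1$, two chains of $(-2)$'s linked to a single vertex of square $-r-2$ --- whose square-$2$ chains embed rigidly as $e_i+e_{i+1}$. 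It then never analyses where the remaining vertex $\zeta$ goes; rather it shows that every square-$2$ vector of $\Lambda_{-Z}^\perp$ must be supported on the $m-a-b-1$ leftover generators, and a parity argument (one cannot take both $g_1+g_2$ and $g_1-g_2$, since then $x_1\cdot y_1\equiv x_2\cdot y_1 \bmod 2$, contradicting $x_i\cdot y_j=\delta_{ij}$) caps the number of admissible $x_i$ at $\half(m-a-b+1)-1$, one short of the $\half(m-a-b+1)$ that a finite-index sublattice of half-integer surgery type requires. This counting argument is uniform in $r$, $a$ and $b$, with no induction.

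Your treatment of the positive crossing changes is also incorrect, and it conceals the second essential input. Under the Montesinos trick a crossing change contributes half-integer surgery data --- after the standard conversion, a $2\times 2$ block $\bigl(\begin{smallmatrix}2&1\\1&*\end{smallmatrix}\bigr)$ --- not a $(\pm1)$-framed unknot, so the handles from positive changes cannot simply be ``blown down''; gluing naively would produce an indefinite closed manifold, about which Donaldson's theorem says nothing. The actual reason arbitrarily many positive changes are permitted is Proposition \ref{prp:owens} (Owens's theorem, resting on Heegaard Floer obstructions): when $n=\half\sigma$, the branched double cover bounds a smooth \emph{positive-definite} $X$ whose intersection form is of half-integer surgery type. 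The paper uses this as a black box; without it (or a substitute) you have neither definiteness of the closed-up manifold nor the $y_i$ vectors whose pairings drive the parity contradiction above. Your reduction of the equality $u(P(3,1,\dots,1,b))=\half(r+3)$ to the obstruction statement, via the signature bound and the explicit $\half(r+3)$-move sequence, does match the paper and is fine.
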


\begin{thm}\label{thm:pretz333u}We have $u(P(3,3,3c))=3$ for $c$ positive and odd. In general, for $K=P(3a,3b,3c)$ with $a$, $b$ and $c$ positive and odd, the unknotting number $u(K)\geq3$.\end{thm}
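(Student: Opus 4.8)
The plan is to prove the upper bound $u(P(3,3,3c))\le 3$ by exhibiting an explicit unknotting sequence, and to prove the lower bound $u(P(3a,3b,3c))\ge 3$ by ruling out every unknotting sequence of length at most $2$ through a case analysis on the signs of the crossings changed. For the upper bound I would work directly with the standard pretzel diagram: changing one crossing in the first $3$-twist region reduces it to a $1$-twist region, and similarly for the second, so two crossing changes turn $P(3,3,3c)$ into a pretzel of the form $P(1,1,3c)$; a third change converts one of the new $1$-twists into a $-1$-twist, the first two tangles then cancel, and what remains closes up to the unknot. This is a routine diagrammatic verification that caps $u$ at $3$.

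For the lower bound I would first assemble two invariants. By the Gordon--Litherland computation quoted in the introduction, $\sigma(K)=n-1=2$, since $3a,3b,3c$ are all positive and odd. Tracking $\sigma$ along a hypothetical sequence that changes $p$ positive crossings (to negative) and $m$ negative crossings (to positive), and using $\sigma(K_-)-\sigma(K_+)\in\{0,2\}$, the net change must be $-2$; each positive change can only raise $\sigma$, so the all-positive profile $(p,m)=(2,0)$ is impossible and at least one negative crossing must always be changed. Separately, a Goeritz matrix of the standard diagram is congruent to $3\left(\begin{smallmatrix} a+b & -b \\ -b & b+c \end{smallmatrix}\right)$, with nonzero determinant $9(ab+bc+ca)$ and vanishing reduction modulo $3$; hence $\dim_{\mathbb{F}_3}H_1(\Sigma_2(K);\mathbb{F}_3)=2$, equivalently $V_K(e^{i\pi/3})=\pm 3$ by the theorem of Lickorish and Millett. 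Since a single crossing change alters this dimension by at most $1$, we get $u(K)\ge 2$, excluding the length-$1$ profiles.

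It then remains to exclude the two length-$2$ profiles left standing: $(1,1)$, one positive and one negative change, and $(0,2)$, two negative changes. For $(1,1)$ I would extend Traczyk's Jones-polynomial argument, whose proof that $P(3,3,3)$ admits no such unknotting rests on the behaviour of $V_K$ at $e^{i\pi/3}$ under crossing changes of opposite sign; the task is to push this through with the extra twist parameters $a,b,c$ recorded in the skein relation. For $(0,2)$ I would extend Owens's diagonalization argument: the double branched cover $\Sigma_2(K)$ bounds, on one side, the definite $4$-manifold determined by the Goeritz form above, and on the other a $4$-manifold assembled from the two negative-to-positive crossing changes; gluing and applying Donaldson's theorem forces an embedding of the Goeritz lattice into a diagonal lattice of rank controlled by $m=2$, and one shows no such embedding exists.

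The main obstacle is the uniformity of these last two obstructions across the whole family. The data $V_K(e^{i\pi/3})=\pm 3$ and $\dim_{\mathbb{F}_3}H_1=2$ are insensitive to $a,b,c$, which is encouraging, but the finer Traczyk and Donaldson obstructions genuinely involve the higher-order terms of the Jones polynomial and the full linking form of $\Sigma_2(K)$, both of which do depend on $a,b,c$. The delicate point is to verify that enlarging the tangles from $P(3,3,3)$ to $P(3a,3b,3c)$ never opens up room for a two-crossing unknotting; that is, that both the lattice-embedding obstruction and the Jones obstruction remain sharp after the substitution, rather than degenerating for some large values of $a$, $b$, or $c$.
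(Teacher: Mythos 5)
Your upper bound and your preliminary reductions are fine: the three-crossing sequence $P(3,3,3c)\to P(1,3,3c)\to P(1,1,3c)\to P(1,-1,3c)=O$ matches the paper's Figure 4, the signature computation $\sigma(K)=2$ correctly kills the profile $(p,n)=(2,0)$, and the mod-$3$ nullity $d=2$ correctly gives $u\geq 2$. The genuine gap is that you have assigned the two remaining obstructions to the wrong profiles, and in each case the tool you propose cannot do the job you ask of it. Traczyk's theorem (Proposition \ref{prp:traczyk}) is a pure parity statement: when $p+n=d=2$ it forces $p\equiv s \pmod 2$. The sign of $V(K;\omega)$, which you leave undetermined as $\pm 3$, is therefore essential, and computing it is the real content of the paper's Lemmas \ref{lem:jompretz1} and \ref{lem:jompretz2} (a skein induction through $P(3a,3b)$): the answer is $V(K;\omega)=+3$, hence $s\equiv 1$, hence $p$ must be \emph{odd}. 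So the Jones obstruction is \emph{consistent} with $(1,1)$ --- it is exactly the profile it permits --- and what it excludes is $(0,2)$ (and $(2,0)$). Your plan to ``push Traczyk through'' against $(1,1)$ will end with no contradiction no matter how carefully you track $a,b,c$.

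Symmetrically, the Owens--Donaldson obstruction (Proposition \ref{prp:owens}) applies only when the number of negative changes satisfies $n=\half\sigma(K)=1$; for the profile $(0,2)$ one has $n=2$, the glued-up closed $4$-manifold is then not positive-definite, and Donaldson's theorem says nothing, so your lattice-embedding plan for $(0,2)$ fails at the first step. It is $(1,1)$ that the Donaldson side must kill, and the paper does this without any new lattice computation for $P(3a,3b,3c)$: since $P(3,1,3)$ becomes $P(3a,3b,3c)$ by changing $3(a+b+c)-7$ positive crossings only, any unknotting of $P(3a,3b,3c)$ with $n\leq 1$ concatenates to an unknotting of $P(3,1,3)$ with $n\leq 1$, contradicting Theorem \ref{thm:ratpretz} --- whose strength is precisely that it tolerates \emph{any} number of positive changes. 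This reduction also dissolves the uniformity worry in your final paragraph: no ``sharpness across the family'' needs to be verified on the Donaldson side, because the obstruction is only ever invoked for the single knot $P(3,1,3)$, and on the Jones side the skein computation gives $V(K;\omega)=3$ uniformly in $a,b,c$.
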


Both of these results are special cases of the following conjecture of Jablan and Sazdanović \cite{js07}.

\begin{cnj}\label{cnj:pretz}For $n$ odd and $a_1,\dots,a_n$ positive and odd, and with $a_1\leq a_2\leq\dots\leq a_n$, $$u(P(a_1,\dots,a_n))=\frac{\sum_{i=1}^{n-1}a_i}{2}\mbox.$$\end{cnj}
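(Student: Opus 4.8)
The plan is to prove the two inequalities $u(P(a_1,\dots,a_n))\leq\half\sum_{i=1}^{n-1}a_i$ and $u(P(a_1,\dots,a_n))\geq\half\sum_{i=1}^{n-1}a_i$ separately, taking Theorems \ref{thm:ratpretz} and \ref{thm:pretz333u} as templates. The upper bound is the routine direction, given by an explicit unknotting sequence; the lower bound is the crux and would combine the signature with a Donaldson-type obstruction extending Owens and a Jones-polynomial obstruction extending Traczyk.

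For the upper bound I would change crossings only in the $n-1$ smallest tangles, leaving the largest, $a_n$, untouched. A crossing change inside a twist region of $a_i$ like-signed crossings deletes a cancelling pair and so replaces $a_i$ by $a_i-2$; thus tangle $i$ can be reduced to a single half-twist $\varepsilon_i=+1$ in $\half(a_i-1)$ changes or to $\varepsilon_i=-1$ in $\half(a_i+1)$ changes. Choosing the signs so that exactly $\half(n-1)$ of the first $n-1$ tangles end as $-1$ (possible as $n-1$ is even), the total is $\sum_{i=1}^{n-1}\half(a_i-1)+\half(n-1)=\half\sum_{i=1}^{n-1}a_i$. It then remains to check that, for a suitably balanced arrangement, $P(\varepsilon_1,\dots,\varepsilon_{n-1},a_n)$ is the unknot, which I would do by isotoping away the $\half(n-1)$ adjacent $(+1,-1)$ pairs of bands and undoing the remaining twist region by Reidemeister I moves. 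Verifying this cancellation in general is the only delicate point in this direction, and it recovers the sequences behind the two theorems.

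For the lower bound, write any unknotting sequence as $p$ changes at positive crossings and $q$ at negative crossings, so that $u=p+q$. Because changing a positive crossing can only raise $\sigma$, while $\sigma(K)=n-1>0$ must be brought to $0$, one needs $q\geq\half(n-1)$; this signature bound is already sharp when all $a_i=1$, where $P(1,\dots,1)$ is the torus knot of unknotting number $\half(n-1)$. To register the larger tangles I would extend Owens' argument: the double branched cover of $P(a_1,\dots,a_n)$ bounds a negative-definite plumbing, and an unknotting sequence with $q$ negative changes yields a second negative-definite filling whose form, by Donaldson's theorem, embeds in $\Zx^N$; analysing these embeddings should show that $q$ must exceed its signature minimum, generalising the statement in Theorem \ref{thm:ratpretz} that $K$ cannot be unknotted with $\half\sigma$ negative and any number of positive changes. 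The configurations that survive this bound—those with $q$ moderate and $p$ small—I would then exclude by extending Traczyk's method, evaluating the Jones polynomial at roots of unity after Lickorish and Millett to constrain the signed crossing numbers, exactly as the $(p,q)=(1,1)$ unknotting of $P(3,3,3)$ is ruled out.

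The hard part is to make the Donaldson and Jones obstructions together exclude every $(p,q)$ with $p+q<\half\sum_{i=1}^{n-1}a_i$, uniformly in $n$ and the $a_i$. The diagonalization argument demands control of lattice embeddings for plumbing graphs whose size grows with $\sum a_i$, and it is unclear that their combinatorics stays tractable, or that the resulting bound on $q$ is large enough, once the tangles are both many and large. Conversely, the Jones-polynomial evaluations are sharp for small knots but may weaken as the tangles grow, so that neither tool alone closes the gap left by the signature. Reconciling the ranges in which each is effective—and in particular the mixed sequences with $p$ and $q$ both moderate—is where I expect the genuine difficulty to lie, and is presumably why the conjecture is still open beyond the families settled here.
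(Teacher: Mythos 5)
The statement you were asked to prove is Conjecture \ref{cnj:pretz}, which the paper attributes to Jablan and Sazdanovi\'c and does \emph{not} prove: the paper's two theorems settle only the special cases $P(3,1,\dots,1,b)$ and $P(3,3,3c)$, and the general conjecture is open. So there is no proof in the paper to compare against, and your text is, as you yourself half-concede, a research programme rather than a proof. One half of it is sound: your upper bound is correct and is exactly the kind of explicit sequence the paper invokes in Figures \ref{fig:3111bunknot} and \ref{fig:333cunknot} --- reduce each of the first $n-1$ tangles to $\pm1$, balance the signs, cancel adjacent $(+1,-1)$ pairs (using the fact that single-crossing tangles can be flyped past the others), and unwind the remaining $P(a_n)$ by Reidemeister I moves; the count $\half\sum_{i=1}^{n-1}a_i$ comes out right. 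But an upper bound plus an unproven lower bound is not a proof of the equality.

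The genuine gap is the lower bound, and it is worse than unfinished: the two obstructions you propose cannot grow with the $a_i$, while the conjectured bound does. The Donaldson argument via Proposition \ref{prp:owens} obstructs only unknotting sequences with exactly $\half\sigma$ negative changes, so it improves the signature bound $u\geq\half(n-1)$ by at most one, independently of how large the $a_i$ are; nothing in the lattice-embedding analysis of Theorem \ref{thm:ratpretz} scales with $\sum a_i$. Traczyk's criterion (Proposition \ref{prp:traczyk}) applies only when the \emph{total} number of changes equals the nullity $d$ of the mod-$3$ Seifert form, which for these pretzels is at most $n-1$ and in the paper's applications is $1$ or $2$; moreover the evaluation $V(K;\omega)$ depends only on the $a_i$ modulo $3$, so it yields a parity constraint on $p$, never a bound of the required magnitude. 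Concretely, for $P(5,5,5)$ the conjectured unknotting number is $5$, yet $\sigma=2$ gives $u\geq1$, a Donaldson-type extension could give at best $u\geq2$, and the mod-$3$ Jones evaluation sees $P(5,5,5)$ no differently from knots with tiny unknotting number; no combination of these tools reaches $5$. Closing the gap would require an obstruction sensitive to the actual sizes of the $a_i$, not just their residues modulo $2$ and $3$ --- which is precisely why the conjecture remains open beyond the families the paper settles, and why your plan, as stated, provably cannot succeed rather than merely needing details filled in.
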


This paper is an expanded excerpt from a master's thesis \cite{shewellbrockway13} prepared at the University of Glasgow under the supervision of Brendan Owens, to whom many thanks are due for his advice and support.

\section{Unknotting rational pretzel knots}
\label{sec:ratpretz}
\setcounter{thm}{0}
In this section we consider pretzel knots of the form $P(a,1,\dots,1,b)$, with $a$ and $b$ odd and at least $3$, and an odd number $r$ of $1$s; knots of this form are also part of the family known as rational or two-bridge knots. The technique in this section was established for another family of rational knots, including $P(3,1,3)$, by Owens \cite{owens10}, who also established the case of $P(3,1,1,1,3)$ by other methods in \cite{owens08}.

We will require the following definition.

\begin{dfn}\label{dfn:halfintsurg}A bilinear form $q$ on some free abelian group $M$ of rank $2m$ is of \textbf{half-integer surgery type} if it has a matrix representation $$Q=\left(\begin{array}{c c}2I&I\\I&*\end{array}\right)$$ over some basis $\{x_1,\ldots,x_m,y_1,\ldots,y_m\}$. \cite[Definition 1]{owens10}\end{dfn}

We also make use of the following corollary of \cite[Theorem 3]{owens08} (see also \cite[Theorem 2]{owens10}, \cite[Theorem 1]{os13}).

\begin{prp}\label{prp:owens}Let $K$ be a knot with signature $\sigma$, and suppose that $K$ can be unknotted by changing $p$ positive and $n$ negative crossings, with $n=\half\sigma$. Then the branched double cover $Y$ of $S^3$ over $K$ bounds some smooth, oriented, positive-definite $4$-manifold $X$, with intersection form of half-integer surgery type.\end{prp}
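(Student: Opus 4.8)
The plan is to build the positive-definite $4$-manifold $X$ directly from the unknotting sequence, reading off its intersection form from the induced surgery description of the double branched cover $Y=\Sigma(K)$. The essential input is \cite[Theorem 3]{owens08}: an unknotting sequence for $K$ using $p$ positive and $n$ negative crossing changes yields a cobordism, built by attaching $2$-handles, from $\Sigma(\mathrm{unknot})=S^3$ to $Y$. Capping the $S^3$ end with $B^4$ produces a smooth, oriented, simply-connected $X$ with $\partial X=Y$, whose intersection form is the linking matrix of the framed attaching link. My task is then to pin down the framing signs and the shape of this matrix, using the cited theorem together with the hypothesis $n=\half\sigma$.

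First I would record how each crossing change lifts. A crossing is encircled by a small unknot $c$ bounding a disk meeting $K$ in two points, so $c$ lifts to the double cover, and the $\pm1$ surgery on $c$ realizing the change becomes a surgery on its lift whose sign is fixed by the sign of the crossing. Arranging the positive crossing changes to contribute $\langle+1\rangle$ summands (each a blown-down $+1$-framed handle) prevents them from obstructing definiteness; this is precisely what lets the conclusion survive for \emph{any} number of positive crossings. The negative crossing changes carry the signature, and since there are only $n=\half\sigma$ of them — the minimum consistent with $u(K)\ge\half\sigma$ — no negative-definite summand can remain. A signature count for the cobordism, via the $G$-signature theorem or the bookkeeping of \cite{cl86}, then forces $X$ to be positive definite.

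Next I would bring the linking matrix into the form of Definition~\ref{dfn:halfintsurg}. The key point, which is Owens' reason for the terminology, is that each negative crossing change lifts to a \emph{half-integer} surgery on its lifted crossing circle. Realizing a half-integral coefficient by an integral surgery chain introduces, for each of the $m=n$ negative crossing changes, an auxiliary generator $x_i$ of square $2$ linking a single generator $y_i$ exactly once. Collecting these over the $m$ changes produces the diagonal block $2I$ and the off-diagonal block $I$, while the mutual linkings among the $y_i$, together with the blown-down positive contributions, fill the lower-right block $*$. Taking $\{x_1,\dots,x_m,y_1,\dots,y_m\}$ as basis then exhibits the intersection form as $\left(\begin{array}{c c}2I&I\\I&*\end{array}\right)$, i.e. of half-integer surgery type, as required.

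The main obstacle is the simultaneous control of definiteness and of the normal form. Producing \emph{some} definite $X$ is routine once the cobordism is in hand, but the hypothesis $n=\half\sigma$ must be used sharply to exclude any $\langle-1\rangle$ summand, and the half-integer block structure appears only after the right sequence of handle slides on the lifted surgery link. Keeping the framing signs and the orientations of the lifted curves coherent through these slides — so that the $2I$ block, rather than some other characteristic pattern, emerges — is the delicate step; everything else is the bookkeeping already packaged in \cite[Theorem 3]{owens08} and its relatives \cite[Theorem 2]{owens10} and \cite[Theorem 1]{os13}.
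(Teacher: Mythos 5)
The paper gives no proof of this proposition: it is stated as an immediate corollary of \cite[Theorem 3]{owens08}, so your sketch has to be measured against Owens' construction, which it partially reproduces but mis-assembles at the crucial point. Your treatment of the positive crossing changes is wrong. By the Montesinos trick, \emph{every} crossing change --- positive or negative --- is a rational tangle replacement whose effect on the double branched cover is a half-integer ($\pm\half$) Dehn surgery on the lift of the crossing \emph{arc}, i.e.\ the core of the solid torus covering the tangle ball; it is not a $\pm1$-surgery on a lift of the crossing circle $c$. Indeed $c$ bounds a disk meeting the branch locus in two points, so $\mathrm{lk}(c,K)$ is even and $c$ lifts to \emph{two} circles, not one, which already undercuts your first paragraph. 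Consequently the positive changes cannot be packaged as ``blown-down $\langle+1\rangle$ summands'': the lifted curves are in general knotted and linked with one another, so no blow-down move is available, and in the correct construction the half-integer surgery form has $m=p+n$ pairs $(x_i,y_i)$, not $m=n$. Each positive change contributes a pair with block $\left(\begin{smallmatrix}2&1\\1&1\end{smallmatrix}\right)$ and each negative one a pair with block $\left(\begin{smallmatrix}2&1\\1&0\end{smallmatrix}\right)$ (before accounting for linking among the $y_i$); it is exactly the positive definiteness of the former block, not any blow-down, that lets the conclusion tolerate arbitrarily many positive crossing changes. With your $m=n$ bookkeeping, the $p$ positive curves would survive as unpaired generators and the resulting matrix would not have the shape required by Definition \ref{dfn:halfintsurg} at all.

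The second gap is the definiteness step, the only place the hypothesis $n=\half\sigma$ enters. You leave it as a gesture (``a signature count via the $G$-signature theorem or the bookkeeping of \cite{cl86}''), but this count is precisely the analytic content of \cite[Theorem 3]{owens08}: one must compute the signature of the trace of the lifted surgeries and show that $n=\half\sigma(K)$ forces $b_2^-(X)=0$. Note also that your phrasing ``no negative-definite summand can remain'' does not even state the right claim, since an indefinite form need not split off definite summands; what must be excluded is any negative eigenvalue. Since you simultaneously invoke \cite[Theorem 3]{owens08} as your ``essential input'' and then attempt to re-derive its conclusion by a different (and flawed) handle calculus, the argument as written is circular in shape and incorrect in detail; the honest options are either to cite Owens' theorem and merely substitute $n=\half\sigma$, as the paper does, or to carry out the full Montesinos-trick construction with all $p+n$ half-integer chains and the signature computation included.
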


See, e.g., \cite[Section 1.2]{gs99} for the definition of the intersection form $q_X$.

Recall that a lattice over the integers is a free abelian group $L$ equipped with a non-singular bilinear form $\cdot:L\otimes L\to\Zx$, and that given a sublattice $M$ of $L$, the orthogonal complement of $M$ is the sublattice $M^\perp=\{l\in L:l\cdot m=0,\forall m\in M\}$. Let $\Zx^m$ denote the free abelian group on generators $e_1,\dots,e_m$, with the bilinear form defined by $e_i\cdot e_j=\delta_{ij}$ (the Kronecker delta). For convenience we introduce the notation $\Lambda_X$ for the lattice $(H_2(X)/\mathrm{Tor}\,H_2(X),q_X)$.

\begin{thm}[restated]For $K=P(3,1,\dots,1,b)$ with an odd number $r$ of $1$s and $a$ positive and odd, $u(K)=\half(r+3)$. More generally, $K=P(a,1,\dots,1,b)$, with an odd number $r$ of $1$s and $a$ and $b$ positive and odd, cannot be unknotted by changing $\half\sigma(K)=\half(r+1)$ negative and any number of positive crossings.\end{thm}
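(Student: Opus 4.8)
The plan is to argue by contradiction, combining Proposition \ref{prp:owens} with Donaldson's diagonalization theorem. Suppose $K=P(a,1,\dots,1,b)$ could be unknotted by changing $n=\half\sigma(K)=\half(r+1)$ negative crossings and some number of positive crossings. Since $K$ is a two-bridge knot, its branched double cover $Y$ is a lens space $L(p,q)$, where $p=\det K=a+b+rab$ and $q$ is determined by the continued-fraction expansion realising $K$ as a rational knot. By Proposition \ref{prp:owens}, $Y$ bounds a smooth, oriented, positive-definite $4$-manifold $X$ whose lattice $\Lambda_X$ is of half-integer surgery type (Definition \ref{dfn:halfintsurg}), of rank $2m$ with $m=\half(r+1)$: there are distinguished generators $x_1,\dots,x_m$ of square $2$ and partners $y_1,\dots,y_m$ with $x_i\cdot x_j=0$ for $i\neq j$ and $x_i\cdot y_j=\delta_{ij}$, one such pair for each changed negative crossing.

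Next I would produce a complementary filling. The lens space $L(p,q)$ bounds the canonical negative-definite linear plumbing $W$ obtained from the continued-fraction expansion of $p/q$ (all entries $\leq-2$), whose lattice $\Lambda_W$ is the associated linear lattice, say of rank $k$ and determinant $\pm p$. Reversing orientation, $\overline{W}$ is positive-definite with $\partial\overline{W}=-Y$ and $\Lambda_{\overline W}=-\Lambda_W$, so gluing gives a closed, smooth, oriented, positive-definite $4$-manifold $Z=X\cup_Y\overline{W}$. Because $Y$ is a rational homology sphere, the free part of $q_Z$ is, up to finite index, the orthogonal direct sum $\Lambda_X\oplus\Lambda_{\overline W}$. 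Donaldson's diagonalization theorem then forces $q_Z\cong\Zx^N$ with $N=2m+k$, so I obtain a lattice embedding $\Lambda_X\oplus\Lambda_{\overline W}\hookrightarrow\Zx^N$ in which the two summands are mutually orthogonal and together span a sublattice of index $p$.

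The heart of the argument, and the step I expect to be hardest, is the combinatorial analysis of this embedding. Each $x_i$ has square $2$, so its image is $\pm e_s\pm e_t$ for standard basis vectors of $\Zx^N$; the relations $x_i\cdot x_j=0$, $x_i\cdot y_j=\delta_{ij}$, and $x_i\cdot w=0$ for every $w$ in the image of $\Lambda_{\overline W}$ then severely constrain how the $e_s$ may be distributed among the $x_i$, the $y_i$, and the images of the plumbing generators. Since linear lattices embed into $\Zx^N$ only in very constrained ways, I would first pin down the embedding of $\Lambda_{\overline W}$ from the explicit continued fraction of $p/q$ for $K=P(a,1,\dots,1,b)$, and then show that its orthogonal complement in $\Zx^N$ is too small, or of the wrong shape, to accommodate a half-integer surgery form of rank $r+1$. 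The numerical input $p=a+b+rab$ enters precisely here, and the case $a=3$ is no easier than the general case, so I would carry out the analysis uniformly in $a$ and $b$, extending Owens' treatment of $P(3,1,3)$ to arbitrary length $r$.

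Finally, I would convert this obstruction into the stated unknotting number when $a=3$. The signature bound already gives $u(K)\geq\half(r+1)$. If equality held, then tracking $\sigma$ along a shortest unknotting sequence forces a contradiction: each crossing change alters $\sigma$ by $0$ or $\pm2$, and only changes at negative crossings can decrease it, so a sequence of exactly $\half(r+1)$ changes reducing $\sigma$ from $r+1$ to $0$ must consist of $\half(r+1)$ negative changes and no positive ones, which the obstruction above forbids. Hence $u(K)\geq\half(r+3)$. A matching upper bound follows from the easy (upper-bound) direction of Conjecture \ref{cnj:pretz}, namely an explicit unknotting sequence of $\half(r+3)$ crossing changes on the standard pretzel diagram when $a=3$, giving $u(K)=\half(r+3)$; for general $a$ only the obstruction, and not this exact value, is claimed.
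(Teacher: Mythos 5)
Your setup is essentially the paper's, even though you source it differently: the negative-definite linear plumbing you take from the continued fraction of $p/q$ is exactly the Gordon--Litherland/Goeritz filling the paper uses (for $P(a,1,\dots,1,b)$ its lattice is the linear chain with weights $-2,\dots,-2,-r-2,-2,\dots,-2$), and gluing it, reversed, to the manifold $X$ of Proposition \ref{prp:owens} and invoking Donaldson is the same decomposition. The genuine gap is that the step you yourself flag as the heart of the argument --- showing that the orthogonal complement of the linear lattice in $\Zx^N$ cannot contain a finite-index sublattice of half-integer surgery type --- is only announced (``too small, or of the wrong shape''), never carried out. The paper's execution is short but it is the whole proof: the $(-2)$-chains embed rigidly as $e_i+e_{i+1}$ up to sign and permutation; any vector orthogonal to such a chain whose support meets $e_1,\dots,e_a$ must contain an integer multiple of $e_1-e_2+\cdots+e_a$, of square $a\geq3$, so every square-$2$ generator $x_i$ of the half-integer surgery form is supported on the leftover coordinates; the duality condition $x_i\cdot y_j=\delta_{ij}$ then forbids two $x$'s from sharing a coordinate pair (if $x_1=g_1+g_2$ and $x_2=g_1-g_2$, then $x_1\cdot y\equiv x_2\cdot y$ modulo $2$ for every $y$, contradicting $x_1\cdot y_1=1$, $x_2\cdot y_1=0$), so the $x_i$ require pairwise disjoint coordinate pairs, and a parity count shows the complement accommodates exactly one fewer such vector than the $\half(m-a-b+1)$ required. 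None of this is in your sketch; note also that the determinant $p=a+b+rab$, which you predict is the key numerical input, plays no role in the actual obstruction --- what matters is that the chain lengths come from $a,b\geq3$ (so the alternating sums have square at least $3$) together with the parity count.

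There is also a concrete error in your normalization: you fix the rank of $\Lambda_X$ at $2m$ with $m=\half(r+1)$, ``one pair for each changed negative crossing''. Owens' construction produces one pair for each crossing change, positive or negative, so the rank is $2(p+n)$ with $p$ unconstrained, and Proposition \ref{prp:owens} as stated fixes no rank at all. The paper sidesteps this by letting the finite-index embedding $\Lambda_X\hookrightarrow\Lambda_{-Z}^\perp$ determine the rank and running the counting argument uniformly in the ambient rank $m$. With your fixed rank the lattice analysis would at best exclude unknotting sequences with $p=0$; combined with your (correct, and matching the paper's implicit) signature-minimality argument in the final paragraph, that would still yield $u(P(3,1,\dots,1,b))=\half(r+3)$, but it cannot prove the theorem's stronger second assertion that changing $\half(r+1)$ negative crossings together with \emph{any} number of positive ones never unknots $K$.
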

\begin{proof}
Let $K=P(a,1,\dots,1,b)$, with $a$, $b$ and $r$ as above, and let $Y$ be the twofold branched cover of $S^3$ over $K$. A well-known result \cite[]{gl78} states that $Y$ is obtained as the boundary of a smooth $4$-manifold $Z$ with intersection form $q_Z$ equal to the Goeritz form $g_K$, having matrix representation $$Q_Z=\left(\begin{array}{c c c}A&0&\alpha\\0&B&\beta\\\alpha^\tau&\beta^\tau&-r-2\end{array}\right)\mbox,$$ where $A$ and $B$ are respectively $(a-1)\times (a-1)$ and $(b-1)\times (b-1)$ matrices of the form $$\left(\begin{array}{c c c c}-2&-1&0&0\\-1&\ddots&\ddots&0\\0&\ddots&\ddots&-1\\0&0&-1&-2\end{array}\right)$$ and $\alpha$ and $\beta$ are column vectors of the form $(-1,0,\dots,0)$. From Sylvester's criterion, we see that $Z$ is negative-definite.

We have two $4$-manifolds, $X$ and $Z$, with diffeomorphic boundaries. Consider the manifold $-Z$, with boundary $-Y$ and intersection form $q_{-Z}=-q_Z$. We can join $X$ and $-Z$ along their boundaries such that their orientations are preserved. We denote this manifold $W$. Note that in general $W$ is not unique: we have to make a choice of diffeomorphism of the boundaries; however, it doesn't matter which one we choose for the purposes of this argument.

We have established that $W$ is a closed, smooth $4$-manifold. Consider the Mayer--Vietoris sequence $$H_2(Y)\to H_2(X)\oplus H_2(-Z)\stackrel\phi\to H_2(W)\to H_1(Y)\mbox.$$ First, note that since the map $\phi$ is induced by the inclusion maps of $X$ and $-Z$ into $W$, it preserves intersection forms. Since $K$ is a knot, $H_2(Y)$ is trivial, so $\phi$ is a monomorphism. The cokernel $\coker\phi\subseteq H_1(Y)$ is finite since $K$ is a knot \cite{lickorish97}. We conclude from this that $W$ is positive-definite.

Donaldson's diagonalization theorem \cite{donaldson87} tells us that there exists a basis such that the intersection form $q_W$ has the $m\times m$ matrix representation $I_m=\mathrm{diag}(1,\dots,1)$. In terms of lattices, this means that we can embed $\Lambda_{-Z}$ in $\Zx^m\cong\Lambda_W$. Let $\Zx^m$ be generated by $e_1,\dots,e_m$ as above, and let $\Lambda_{-Z}$ have a basis $$\{\xi_1,\dots,\xi_{a-1},\eta_1,\dots,\eta_{b-1},\zeta\}\mbox,$$ over which $-q_Z$ has the matrix representation shown above with reversed signs. Up to changes of sign and permutations of the $e_i$, such an embedding must have the form \begin{align*}\xi_i&\mapsto e_i+e_{i+1}\\\eta_j&\mapsto e_{a+j}+e_{a+j+1}\mbox,\end{align*} but $\zeta$ does not embed uniquely.

We know that $\Lambda_X$ must embed as a finite-index sublattice into $\Lambda_{-Z}^\perp$. Since $\rk\Lambda_{-Z}=a+b-1$, it follows that $\rk \Lambda_{-Z}^\perp=m-a-b+1$. A finite-index sublattice of half-integer surgery type must therefore have $\half(m-a-b+1)$ generators $x_i$ with $x_i\cdot x_j=2\delta_{ij}$. Any element of $\Lambda_{-Z}^\perp$ whose expression involves a non-zero multiple of $e_i$, with $1\leq i\leq a$, must contain some multiple of $e_1-e_2+e_3-\dots+e_a$ by the definition of the orthogonal complement. Similarly, if $e_{a+j}$, with $1\leq j\leq b$, is involved in the expression we have to include $e_{a+1}-e_{a+2}+\dots+e_{a+b}$. Therefore these elements with square $2$ must come out of the sublattice of $\Lambda_W$ spanned by $e_{a+b+1},\dots,e_m$, of which there are $m-a-b-1$; for brevity we write $g_i=e_{a+b+i}$. First let $x_1=g_1+g_2$. We can't let $x_2=g_1-g_2$ because in that case $x_1\cdot y_1\equiv x_2\cdot y_1$ modulo $2$, and we need $x_1\cdot y_1=1$ and $x_1\cdot y_2=0$. Therefore we have to set $x_2=g_3+g_4$, $x_3=g_5+g_6$ and so on. Therefore the greatest number of $x_i$ we can embed in $\Lambda_{-Z}^\perp$ is $\half(m-a-b-1)=\half(m-a-b+1)-1$. The second part of the result follows, and since for $P(3,1,\dots,1,b)$ we have an explicit unknotting sequence of $\half(r+3)$ (negative) crossing changes (see Figure \ref{fig:3111bunknot}) we also obtain the first part.
\end{proof}
\begin{figure}\centering\includegraphics*{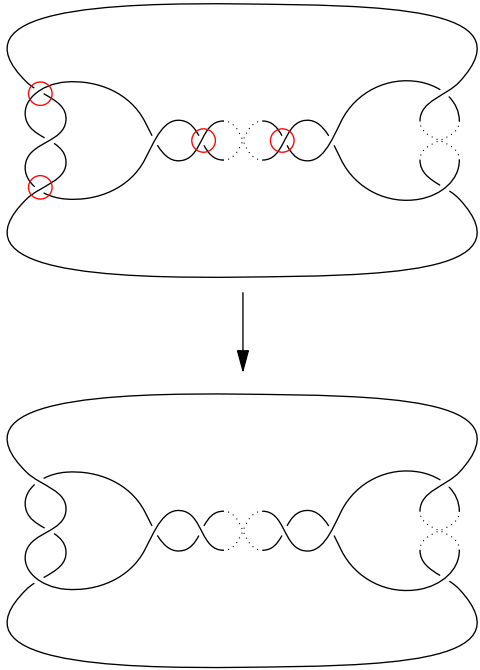}\caption{An unknotting sequence for $P(3,1,\dots,1,b)$}\label{fig:3111bunknot}\end{figure}

\begin{rem}\label{rem:os}If \cite[Theorem 1]{os13} is used in place of Proposition \ref{prp:owens}, then we obtain versions of Theorem 1 with the $4$-ball crossing number $c^*(K)$, the concordance unknotting number $u_c(K)$ or the slicing number $u_s(K)$ (see \cite{os13} for definitions of all of these) in place of the unknotting number.\end{rem}

\section{Unknotting more pretzels}
\label{sec:morepretz}
\begin{figure}\centering\includegraphics*{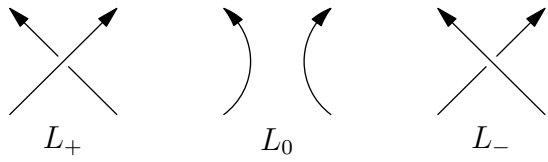}\caption{A skein triple}\label{fig:skein}\end{figure}
Recall that the Jones polynomial $V(L)$ is an oriented link invariant which takes values in the ring $\Zx[q^{\pm\half}]$ of Laurent polynomials in a single indeterminate $q^\half$ with integer coefficients. The Jones polynomial is defined by $V(O)=1$, where $O$ denotes the unknot, and the skein relation $$(q^\half-q^{-\half})V(L_0)=q^{-1}V(L_+)-qV(L_-)$$ for a skein triple of links differing only inside a $3$-ball as shown in Figure \ref{fig:skein}. A thorough treatment of the Jones polynomial, including a proof that this relation is indeed well-defined and sufficient to compute the Jones polynomial of any oriented link, may be found in Chapter 3 of \cite{lickorish97}.

Using the Jones polynomial in addition to the above results obtained using Donaldson's diagonalization theorem, it is possible to compute a lower bound on the unknotting number of $P(3a,3b,3c)$, giving an explicit value for $P(3,3,3c)$. We will require the following result of Lickorish and Millett.

\begin{prp}\label{prp:licmil}For any $r$-component link $L$, $V(L;\omega)=(-1)^si^{r-1}(i\sqrt3)^d$, where $d$ is the nullity of the modulo-$3$ reduction of the symmetrized Seifert form of $L$ and $\omega=e^{i\pi/3}$. \cite[Theorem 3]{lm86}\end{prp}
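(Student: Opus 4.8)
The plan is to prove the evaluation by induction via the skein relation, reducing every diagram to unlinks through crossing changes and smoothings, with the $r$-component unlinks as base cases and carrying the three data $r$, $d$ and the sign along together. I first record the homological meaning of $d$: writing $\Sigma(L)$ for the double cover of $S^3$ branched over $L$, the symmetrized Seifert form is a presentation matrix for $H_1(\Sigma(L);\Zx)$, so its mod-$3$ nullity equals $\dim_{\Zx/3}H_1(\Sigma(L);\Zx/3)$. For the unlink $\Sigma(O^r)=\#^{r-1}(S^1\times S^2)$, so $d=r-1$, while a direct computation gives $V(O^r;\omega)=(-\sqrt3)^{r-1}$; since $i^{r-1}(i\sqrt3)^{r-1}=(-\sqrt3)^{r-1}$, the base case holds with $s$ even.

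For the inductive step I evaluate the skein relation at $\omega$, where the coefficients become sixth roots of unity: it takes the form $i\,V(L_0)=e^{-i\pi/3}V(L_+)-e^{i\pi/3}V(L_-)$, with $q^{1/2}-q^{-1/2}=i$. The component count is easy to follow, since $L_+$ and $L_-$ carry the same $r$ while $L_0$ has $r\mp1$ components according to whether its smoothing merges two components or splits one. Substituting the conjectured closed form for $L_+$, $L_-$ and $L_0$ and cancelling the common power of $i$ reduces the whole statement to an identity among the exponents $d_+$, $d_-$, $d_0$ and the three signs, so the real work is to control how $d$ changes across a skein triple.

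For this I pass to the branched double covers and apply the Montesinos trick: $\Sigma(L_+)$, $\Sigma(L_-)$ and $\Sigma(L_0)$ are the three Dehn fillings of a single manifold $M$ with torus boundary, along slopes whose pairwise geometric intersection numbers are $1$ or $2$, hence coprime to $3$, so the slopes are pairwise independent in $H_1(\partial M;\Zx/3)$. A Mayer--Vietoris computation with $\Zx/3$ coefficients then shows that $d$ for each filling is governed by the image of its slope under $H_1(\partial M;\Zx/3)\to H_1(M;\Zx/3)$: either all three covers share a common value of $d$, or exactly one of them has $d$ one greater than the common value of the other two. Granting this dichotomy, the surviving identity is a finite check in each case, and it simultaneously pins down the forced relation among the signs (for example, when all three exponents agree, $s$ must jump by one between $L_+$ and $L_-$ and be unchanged for $L_0$).

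The main obstacle is exactly this control of the mod-$3$ homology under a crossing change and a smoothing, together with showing that the locally forced sign relations assemble into a single well-defined sign $(-1)^s$ attached to $L$ rather than to the diagram. It helps to note that $V(L;\omega)$ lies in the Eisenstein integers $\Zx[e^{i\pi/3}]$ (up to the factor $e^{i\pi/6}$ present when $L$ has an even number of components), a ring in which $3$ ramifies with prime $i\sqrt3$; so once the homological input forces $|V(L;\omega)|^2=3^{d}$, the value is a unit multiple of $(i\sqrt3)^d$. What the inductive phase analysis must still supply is that this unit is the claimed $\pm i^{r-1}$ — in particular that no nontrivial cube-root-of-unity factor intrudes — and the determination of its overall sign; this final bookkeeping of the phase is the delicate part of the argument.
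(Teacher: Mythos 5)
Note first that the paper offers no proof of this proposition at all: it is imported verbatim from Lickorish and Millett \cite[Theorem 3]{lm86}, so you are reproving a cited theorem. Your skeleton is the right one (it is essentially theirs: skein induction anchored at unlinks, plus a lemma controlling the mod-$3$ nullity across a skein triple), and your topological half is correct and nicely done. The base case checks out ($\Sigma(O^r)=\#^{r-1}(S^1\times S^2)$ gives $d=r-1$, and $i^{r-1}(i\sqrt3)^{r-1}=(-\sqrt3)^{r-1}$), and the dichotomy is genuinely provable by your route: the three filling slopes have pairwise distances $1,1,2$, all coprime to $3$, hence are pairwise independent in $H_1(\partial M;\Zx/3)\cong(\Zx/3)^2$; half-lives-half-dies over the field $\Zx/3$ makes the kernel of $H_1(\partial M;\Zx/3)\to H_1(M;\Zx/3)$ a line, so at most one slope dies, and the three nullities are either all equal or exactly one exceeds the other two by one. (Lickorish and Millett obtain the same lemma via Goeritz forms.)

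The genuine gap is in the inductive step, where your claim that ``granting this dichotomy, the surviving identity is a finite check'' fails, and the sign determination you defer as ``final bookkeeping'' is in fact the crux. At $q=\omega$ the relation gives $V(L_+)=i\omega V(L_0)+\omega^2V(L_-)$. Take $r_0=r-1$ and inductive inputs $V(L_0)=\varepsilon_0 i^{r-2}(i\sqrt3)^{d_0}$, $V(L_-)=\varepsilon_- i^{r-1}(i\sqrt3)^{d_-}$ with $\varepsilon_0,\varepsilon_-\in\{\pm1\}$. If $d_-=d_0$, the output is $i^{r-1}(i\sqrt3)^{d_0}(\omega\varepsilon_0+\omega^2\varepsilon_-)$, and $\omega\varepsilon_0+\omega^2\varepsilon_-$ equals $\pm1$ when $\varepsilon_0\neq\varepsilon_-$ but $\pm i\sqrt3$ when $\varepsilon_0=\varepsilon_-$; since the dichotomy permits both $d_+=d_0$ and $d_+=d_0+1$, nothing in your argument ties the exponent the algebra produces to the topologically correct $d_+$ --- and your appeal to ``$|V(L;\omega)|^2=3^d$ forced by the homological input'' is circular, because that modulus statement \emph{is} the exponent half of the theorem. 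Worse, if $d_-=d_0+1$ (so the dichotomy forces $d_+=d_0$), the inputs with $\varepsilon_0=1$, $\varepsilon_-=-1$ yield coefficient $\omega-i\sqrt3\,\omega^2=2+i\sqrt3$, of norm $7$, which is not of the required form at all. So the induction hypothesis as you have framed it --- the shape of the value alone --- does not close: you must additionally prove that the offending sign patterns never occur, i.e.\ track how the sign correlates with which slope dies in $H_1(M;\Zx/3)$ across the triple. That correlation is real mathematical content, not phase bookkeeping; it is exactly what Lickorish and Millett's case analysis supplies, and what Traczyk's refinement (Proposition \ref{prp:traczyk} in this paper) later exploits.
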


\begin{lem}\label{lem:jompretz1} Let $K=P(3a,3b)$, where $a$ and $b$ are positive and odd. Then $V(K;\omega)=-\sqrt3$.\end{lem}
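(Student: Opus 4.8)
The plan is to compute $V(P(3a,3b);\omega)$ using Proposition \ref{prp:licmil}, which reduces the problem to two pieces of data: the number of components $r$ of the link $P(3a,3b)$, and the nullity $d$ of the mod-$3$ reduction of its symmetrized Seifert form. The factor $(-1)^s$ will be pinned down afterwards. First I would determine $r$: the pretzel link $P(3a,3b)$ with two strands each carrying an odd number of crossings is a two-component link, so $r=2$ and the factor $i^{r-1}=i$ appears. This already forces the answer to be of the form $(-1)^s\,i\,(i\sqrt3)^d$, and since the claimed value $-\sqrt3$ is real, we must have $d$ odd.

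Next I would compute the nullity $d$. The clean way is to build an explicit Seifert surface for $P(3a,3b)$ — the obvious checkerboard or band surface for a two-strand pretzel — and write down the symmetrized Seifert form $V+V^\tau$ as an explicit integer matrix whose size depends on $3a$ and $3b$. Reducing this matrix modulo $3$ and computing the rank of the resulting matrix over $\mathbb{F}_3$ gives $d$ as (size $-$ rank). I expect the mod-$3$ reduction to simplify dramatically precisely because the crossing counts $3a$ and $3b$ are divisible by $3$: the tridiagonal-type blocks coming from the $3a$ and $3b$ twist regions should collapse in a controlled way mod $3$, leaving a small, $a,b$-independent kernel. The aim is to show $d=1$ uniformly, giving $(i\sqrt3)^d=i\sqrt3$, so that $i\cdot(i\sqrt3)=i^2\sqrt3=-\sqrt3$ and hence $s$ is even.

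**The hard part will be** the nullity computation: I need to set up the Seifert matrix for $P(3a,3b)$ correctly (choosing an orientation and a basis of the first homology of the Seifert surface), and then carry out the mod-$3$ rank computation in a way that is uniform in $a$ and $b$ rather than case-by-case. The risk is that a naive surface gives a large matrix with an awkward boundary block; I would look for a surface minimizing the genus, or alternatively compute the nullity $d$ indirectly via the relation of Proposition \ref{prp:licmil} to the order of the first homology $H_1$ of the branched double cover modulo $3$, since for a pretzel link this homology is controlled by the Goeritz/linking form, whose determinant I can read off combinatorially. Establishing $d=1$ by one of these two routes — explicit $\mathbb{F}_3$-linear algebra on the symmetrized Seifert form, or a count of the $3$-torsion in $H_1$ of the double branched cover — is the crux.

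**Finally**, once $r=2$ and $d=1$ are in hand, Proposition \ref{prp:licmil} gives $V(K;\omega)=(-1)^s\,i\cdot i\sqrt3=(-1)^{s+1}\sqrt3$, and I would fix the sign $(-1)^s$ either by tracking it through the Lickorish--Millett normalization or, more simply, by evaluating the skein relation on the single smallest base case $P(3,3)$ directly and invoking uniformity in $a,b$, concluding $V(K;\omega)=-\sqrt3$ as claimed.
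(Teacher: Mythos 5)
Your setup --- $r=2$, Proposition \ref{prp:licmil}, and the target $d=1$ --- matches the paper, and your nullity computation would go through: in fact the paper does it even more quickly than you anticipate, since the obvious spanning surface for a two-strand pretzel gives a $1\times1$ symmetrized Seifert form, $(3a+b)$ for $P(3a,b)$, so no large tridiagonal blocks ever appear; your alternative route via the $3$-torsion of $H_1$ of the double branched cover (cyclic of order $3(a+b)$, hence $3$-rank $1$) would also work. The genuine gap is your final step, fixing the sign $(-1)^s$. Proposition \ref{prp:licmil} as quoted gives no formula for $s$: it is an unknown integer that could a priori depend on $a$ and $b$, so ``tracking the Lickorish--Millett normalization'' is not available from the stated result, and computing the single base case $P(3,3)$ and then ``invoking uniformity in $a,b$'' assumes exactly what must be proved --- nothing in your proposal rules out $s$ changing parity as $a$ or $b$ varies. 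This sign determination is not a finishing touch; it is the crux, and is where the paper spends essentially all of its effort.

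The paper closes this gap with a single skein-triple argument that is uniform in $a$ and $b$: take $L_+=P(3a,3b-2)$, $L_-=P(3a,3b)$, $L_0=O$ (the oriented smoothing in the $3b$-band unknots the link). Lickorish--Millett then gives the \emph{shapes} $V(L_+;\omega)=(-1)^{s_+}i$ (nullity $0$, since $3\nmid 3b-2$) and $V(L_-;\omega)=-(-1)^{s_-}\sqrt3$, with both signs unknown. Substituting $q=\omega$, $\omega^\half=e^{i\pi/6}$ (the other square root gives a parallel argument) into the skein relation and using $V(L_0;\omega)=1$ yields
$$(-1)^{s_-}=\frac{\pm\omega^{-1}-1}{i\omega\sqrt3}\mbox,$$
where $\pm=(-1)^{s_+}$; the choice $+$ gives $i/\sqrt3\notin\{\pm1\}$, so it is impossible, while the choice $-$ gives $i\omega^{-\threeh}=1$. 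Thus $(-1)^{s_-}=1$ is forced for \emph{every} odd $a,b$ at once, and $V(K;\omega)=-\sqrt3$. The missing idea in your proposal is precisely this: treat the unknown signs at both $L_+$ and $L_-$ as variables and solve for them by consistency of the skein relation, rather than trying to compute $s$ from a base case and an unproved uniformity claim.
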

\begin{proof}

The symmetrized Seifert form corresponding to the pretzel link $P(3a,b)$ has matrix representation $\hat S=(3a+b)$. Denote its modulo-$3$ version $\hat S_3=(\beta)$. This has nullity $1$ if $\beta=0$, that is if $3|b$, and nullity $0$ otherwise, and we apply Proposition \ref{prp:licmil}.

Define a skein triple $(L_+,L_-,L_0)$ by $L_+=P(3a,3b-2)$, $L_-=P(3a,3b)$ and $L_0=O$. Noting that $L_\pm$ have two components, we have $$V(L_+;\omega)=(-1)^{s_+}i$$ $$V(L_-;\omega)=-(-1)^{s_-}\sqrt3$$ $$V(L_0;\omega)=1\mbox.$$ We can substitute $q=\omega$ into the Jones skein relation, although care must be taken, as $\omega^\half$ has two possible values. Here we take $\omega^\half=e^{i\pi/6}$, but we could equally take $\omega^\half=e^{7i\pi/6}$; the argument in the latter case is entirely parallel to the one given here. In any case, we have \begin{align*}1=\frac{\omega^\half-\omega^{-\half}}{i}&=\pm\omega^{-1}-\omega(-1)^{s_-}i\sqrt3\\\frac{\pm\omega^{-1}-1}{i\omega\sqrt3}&=(-1)^{s_-}\mbox.\end{align*}

Take the $\pm$ sign first to be positive. This yields $$(-1)^{s_-}=\frac{-\omega}{i\omega\sqrt3}=\frac{i}{\sqrt3}$$ which obviously is not satisfied for any $s_-$.

Now let the $\pm$ be negative. Here, $$(-1)^{s_-}=\frac{-\omega^{-\half}\sqrt{3}}{i\omega\sqrt3}=i\omega^{-\threeh}=1\mbox.$$

We conclude that $s_-\equiv 0$ modulo $2$, so that $V(P(3a,3b);\omega)=-\sqrt3$ as required.
\end{proof}

\begin{lem}\label{lem:jompretz2} With $K=P(3a,3b,3c)$ with $a$, $b$ and $c$ positive and odd, $V(K;\omega)=3$.\end{lem}
\begin{proof}
The symmetrized Seifert form corresponding to $P(3a,3b,c)$ has matrix representation $$\hat S=\left(\begin{array}{c c}3(a+b)&-3b\\-3b&3b+c\end{array}\right)$$ with modulo-$3$ version $$\hat S_3=\left(\begin{array}{c c}0&0\\0&\gamma\end{array}\right)\mbox.$$ This has nullity $2$ if $\gamma=0$, that is if $3|c$, and nullity $1$ otherwise.

Define a skein triple by $L_+=P(3a, 3b, 3c-2)$, $L_-=P(3a, 3b, 3c)$, $L_0=P(3a, 3b)$. We have $$V(L_+;\omega)=(-1)^{s_+}i\sqrt3$$ $$V(L_-;\omega)=-3(-1)^{s_-}$$ $$V(L_0;\omega)=-\sqrt3\mbox.$$ The skein relation gives \begin{align*}-i\sqrt3&=\pm i\omega^{-1}\sqrt3+3\omega(-1)^{s_-}\\-\frac{i\sqrt3(1\pm\omega^{-1})}{3\omega}&=(-1)^{s_-}\mbox.\end{align*}

Take the $\pm$ sign to be positive. Thus $$(-1)^{s_-}=-\frac{3i\omega^{-\half}}{3\omega}=-i\omega^{-\threeh}=-1\mbox.$$

Taking the $\pm$ sign to be negative, $$(-1)^{s_-}=-\frac{\sqrt3i\omega}{3\omega}=-\frac{i}{\sqrt3}\mbox,$$ which again has no solutions.

Therefore $s_-\equiv 1$ modulo $2$, so $V(P(3a,3b,3c);\omega)=3$ as required.
\end{proof}

We now use the following result of Traczyk.

\begin{prp}\label{prp:traczyk}Let $K$ be a knot with $V(K;\omega)=(-1)^s(i\sqrt3)^d$ that can be transformed into the unknot by changing $n$ negative and $p$ positive crossings, such that $n+p=d$. Then $p\equiv s$ modulo $2$. \cite[Theorem 3.1]{traczyk98}\end{prp}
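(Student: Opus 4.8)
The plan is to induct on $d=n+p$, reducing everything to a single-crossing-change lemma and reading off the parity from the skein relation evaluated at $\omega$. Since $V(O;\omega)=1$, the unknot has $s=0$, which settles the base case $d=0$ (where $p=0$ as well). For the inductive step I would take the first crossing change in the given unknotting sequence, producing a knot $K_1$ that is unknotted by the remaining $n+p-1$ changes. The point is that the hypothesis $n+p=d$ is rigid: each crossing change alters the modulo-$3$ nullity by at most one (a standard consequence of the fact that the Goeritz, equivalently symmetrized Seifert, forms of $K$ and $K_1$ differ by a rank-one modification), so to pass from nullity $d$ to nullity $0$ in exactly $d$ steps every step must drop the nullity by one. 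In particular $d(K_1)=d-1=n_1+p_1$, so the inductive hypothesis applies to $K_1$.

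Everything then rests on a single-step sign lemma: along such an optimal sequence, changing a positive crossing multiplies $(-1)^s$ by $-1$, whereas changing a negative crossing leaves it unchanged. Granting this, the induction closes at once — if the first change is positive then $p=p_1+1$ and $s\equiv s_1+1$, while if it is negative then $p=p_1$ and $s\equiv s_1$, so $p\equiv s$ follows from $p_1\equiv s_1$ in either case.

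To prove the sign lemma I would run the skein relation of Figure \ref{fig:skein} at $q=\omega$ on the crossing being changed, noting that for a self-crossing of a knot the oriented resolution $L_0$ is a two-component link, so Proposition \ref{prp:licmil} presents the three values as $(-1)^{s_+}(i\sqrt3)^{d_+}$, $(-1)^{s_-}(i\sqrt3)^{d_-}$ and $(-1)^{s_0}i(i\sqrt3)^{d_0}$. Substituting these into $i\,V(L_0;\omega)=\omega^{-1}V(L_+;\omega)-\omega V(L_-;\omega)$ and using the identity $i\sqrt3=2\omega-1$ (so that $\omega^{-1}(i\sqrt3)=\sqrt3\,\omega^{1/2}$ and $\omega(i\sqrt3)=\sqrt3\,\omega^{5/2}$ are again $\sqrt3$ times half-integral powers of $\omega$), the whole relation collapses to a single scalar equation. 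The decisive observation is that $V(L_\pm;\omega)$ and $V(L_0;\omega)$ must each have modulus an integral power of $\sqrt3$: after factoring out $(i\sqrt3)^{d-1}$, the flipped and unflipped choices of sign produce residual brackets of modulus $1$ and $\sqrt7$ respectively, and only the former is admissible. This forces the sign behaviour asserted in the lemma, and simultaneously pins down $d_0=d-1$ and the value of $s_0$.

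The main obstacle I anticipate is the sign bookkeeping rather than any hard geometry. Two points demand care. First, the justification that the nullity changes by at most one under a crossing change, which is what makes the hypothesis $n+p=d$ force a strict drop at every step. Second, matching the crossing-sign convention so that the parity lands on $p$ and not on $n$: the scalar computation above is symmetric under interchanging the roles of $L_+$ and $L_-$, so which of the two crossing types is the sign-flipping one is precisely the convention-dependent input that must be aligned with that of \cite{traczyk98}. Once the convention is fixed and the modulus argument is carried through, the congruence $p\equiv s$ is immediate from the induction.
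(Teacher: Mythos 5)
You should first note that the paper does not prove this proposition at all: it is quoted verbatim from \cite[Theorem 3.1]{traczyk98}, so the comparison is really against Traczyk's argument. Your strategy is essentially his: induct on $d$, use the fact that a crossing change moves the modulo-$3$ nullity by at most one (so the hypothesis $n+p=d$ forces a drop of exactly one at every step), and settle the sign behaviour of a single step by evaluating the skein relation at $\omega$ and invoking a dichotomy between modulus $1$ and modulus $\sqrt7$. Indeed the paper's own Lemmas \ref{lem:jompretz1} and \ref{lem:jompretz2} carry out precisely this kind of computation for specific families, so the machinery is sound; and your proposed repair of the nullity step (the double branched covers differ by a single integral surgery, so $\dim H_1(\,\cdot\,;\Zx_3)$ changes by at most one) is the right way to make the somewhat loose ``rank-one modification'' claim precise.

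The genuine gap is in your single-step sign lemma, and it is not mere bookkeeping. Your assertion that the scalar computation ``is symmetric under interchanging the roles of $L_+$ and $L_-$'' is false: the coefficients $\omega^{-1}$ and $\omega$ in $\omega^{-1}V(L_+;\omega)-\omega V(L_-;\omega)=\pm iV(L_0;\omega)$ break that symmetry, and once the skein relation, the choice $\omega=e^{i\pi/3}$ and the crossing signs of Figure \ref{fig:skein} are fixed, the computation itself --- not a convention --- decides which crossing type flips $(-1)^s$. There are two cases according to which of $d_\pm$ is larger, and they give \emph{opposite} answers. If $d_+=d_-+1$ (changing a positive crossing drops the nullity), dividing by $(i\sqrt3)^{d_-}$ leaves $\pm\left(\sqrt3\omega^{\half}-\omega\right)$ for equal signs, which equals $\pm1$ since $\sqrt3\omega^{\half}-\omega=1$, while opposite signs give $\sqrt3\omega^{\half}+\omega$ of modulus $\sqrt7$: the sign is \emph{preserved}. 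If $d_-=d_++1$ (changing a negative crossing drops the nullity), one instead finds $\omega^{-1}+\sqrt3\omega^{\frac{5}{2}}=-1$ for opposite signs and modulus $\sqrt7$ for equal signs: the sign \emph{flips}. So your uniform claim ``flipped gives modulus $1$, unflipped gives $\sqrt7$'' holds in only one of the two cases, and your lemma (``positive flips, negative preserves'') comes out backwards: under these conventions the induction yields $s\equiv n$, not $s\equiv p$. A sanity check: the right-handed trefoil has $V(q)=-q^4+q^3+q$, so $V(\omega)=i\sqrt3$, giving $s=0$ and $d=1$, and it is unknotted by changing one positive crossing ($p=1$, $n=0$), satisfying $s\equiv n$ and violating $s\equiv p$. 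Reconciling your lemma with the statement as quoted therefore requires identifying the mirror or variable convention under which Traczyk works (replacing $\omega$ by $\bar\omega$ replaces $s$ by $s+d$ and exchanges the two readings); in the paper's application $d=2$ is even, so the two readings coincide and nothing is at stake there, but in your proof the direction of the one-step lemma is the entire content and cannot be deferred as a ``symmetric relabelling'' to be fixed later.
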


\begin{thm}[restated]We have $u(P(3,3,3c))=3$ for $c$ positive and odd. In general, for $K=P(3a,3b,3c)$ with $a$, $b$ and $c$ positive and odd, the unknotting number $u(K)\geq3$.\end{thm}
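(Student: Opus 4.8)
The plan is to combine three lower bounds on $u(K)$: the signature bound, the Donaldson-type lattice obstruction of Proposition \ref{prp:owens}, and Traczyk's obstruction (Proposition \ref{prp:traczyk}). First I would collect the numerical inputs. By the Gordon--Litherland computation quoted in Section \ref{sec:intro}, $\sigma(K) = 2$, so $\half\sigma = 1$ and the signature alone only yields $u(K) \geq 1$. By Lemma \ref{lem:jompretz2} we have $V(K;\omega) = 3$; writing this in the form $(-1)^s(i\sqrt3)^d$ and comparing absolute values forces $d = 2$, whence $(-1)^s(i\sqrt3)^2 = -3\,(-1)^s = 3$ and so $s$ is odd.

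Next I would examine an arbitrary sequence unknotting $K$ by changing $n$ negative and $p$ positive crossings. Each crossing change alters the signature by $0$ or $\pm 2$, and passing from $\sigma(K) = 2$ to $\sigma = 0$ requires at least one more signature-lowering negative change than signature-raising positive change; in particular $n \geq 1$. The crux is to promote this to $n \geq 2$, i.e.\ to rule out $n = 1$. Here I would apply Proposition \ref{prp:owens} with $n = \half\sigma = 1$: were such a sequence to exist, the branched double cover $Y$ would bound a positive-definite manifold $X$ with $\Lambda_X$ of half-integer surgery type, and in particular $\Lambda_X$ would contain a vector $x_1$ with $x_1\cdot x_1 = 2$.

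This lattice computation is the heart of the argument and the step I expect to be the main obstacle. Following the proof of Theorem \ref{thm:ratpretz}, $Y$ also bounds the negative-definite Goeritz manifold $Z$, whose form now carries three tridiagonal $(-2,-1)$ blocks $A$, $B$, $C$ of sizes $3a-1$, $3b-1$, $3c-1$ coupled to one central generator. Gluing $W = X \cup -Z$, invoking Donaldson's theorem to write $\Lambda_W \cong \Zx^m$ with $m = 3a+3b+3c$, and embedding the blocks in the standard way $\xi_i \mapsto e_i + e_{i+1}$ on disjoint coordinate ranges, the orthogonal complement of the three blocks in $\Zx^m$ is exactly $\langle v_A, v_B, v_C\rangle$, where $v_A = e_1 - e_2 + \dots + e_{3a}$ and similarly for $v_B$, $v_C$; these have squares $3a$, $3b$, $3c$. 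Since $\Lambda_X$ embeds with finite index into $\Lambda_{-Z}^\perp \subseteq \langle v_A, v_B, v_C\rangle$, every element of $\Lambda_X$ has square divisible by $3$, so none can have square $2$. This contradicts the existence of $x_1$; hence $n \neq 1$, so $n \geq 2$ and $u(K) \geq 2$.

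Finally I would rule out $u(K) = 2$. Such a sequence has $n + p = 2$, and with $n \geq 2$ this forces $(n,p) = (2,0)$; but $n + p = 2 = d$, so Proposition \ref{prp:traczyk} requires $p \equiv s \equiv 1 \pmod 2$, contradicting $p = 0$. Therefore $u(K) \geq 3$ for every $P(3a,3b,3c)$. To obtain the equality $u(P(3,3,3c)) = 3$ it then remains only to exhibit a concrete unknotting sequence of three crossing changes, which I would display in a figure as in Theorem \ref{thm:ratpretz}. Besides this explicit sequence, the points needing care are the justification that the block embeddings are forced (up to signed permutation of the $e_i$), handled as in Section \ref{sec:ratpretz}, and the verification that $\Lambda_{-Z}^\perp$ really sits inside $\langle v_A, v_B, v_C\rangle$ regardless of how the central generator embeds.
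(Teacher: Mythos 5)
Your overall architecture is sound and your endgame matches the paper's: the signature gives $n\geq1$, Lemma \ref{lem:jompretz2} gives $d=2$ and $s$ odd, Traczyk (Proposition \ref{prp:traczyk}) then kills $(n,p)=(2,0)$ and more generally any $u=2$ sequence once $n\leq1$ is excluded, and the explicit three-move sequence closes the case $a=b=1$. The genuine gap is in the lattice step, which you rightly flag as the crux: you take $m=3a+3b+3c$, which silently assumes $\rk\Lambda_X=2$. Proposition \ref{prp:owens} places no bound on the rank of the half-integer surgery form; in Owens's construction it is $2(n+p)$, and $p$ is unconstrained in the statement you need (indeed in the one configuration that survives to the final step, $(n,p)=(1,1)$ inside a putative $u=2$ sequence, the rank is already $4$). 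As soon as $m>3(a+b+c)$, your key claim $\Lambda_{-Z}^\perp\subseteq\langle v_A,v_B,v_C\rangle$ is false: the complement also contains the unused coordinates $g_i=e_{3(a+b+c)+i}$, and e.g.\ $g_1+g_2$ has square $2$, so ``every element of $\Lambda_X$ has square divisible by $3$'' fails and no contradiction is reached. (Your worry about the central generator is a red herring; its image only adds constraints and never hurts the argument.)

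The repair is exactly the counting and parity argument from the paper's proof of Theorem \ref{thm:ratpretz}, run on the three-chain Goeritz form. Since $\rk\Lambda_{-Z}=3(a+b+c)-2$, a finite-index sublattice of $\Lambda_{-Z}^\perp$ of half-integer surgery type needs $\half\bigl(m-3(a+b+c)+2\bigr)$ pairwise orthogonal vectors $x_i$ of square $2$ together with duals $y_i$ satisfying $x_i\cdot y_j=\delta_{ij}$. Your divisibility observation shows, correctly, that any square-$2$ vector orthogonal to the chains has vanishing $v_A,v_B,v_C$-components, hence is of the form $\pm g_k\pm g_l$; the mod-$2$ argument (if $x_1=g_1+g_2$ and $x_2=g_1-g_2$ then $x_1\cdot y\equiv x_2\cdot y\pmod2$ for all $y\in\Zx^m$, contradicting $x_1\cdot y_1=1$, $x_2\cdot y_1=0$) forces the pairs to be disjoint, capping the count at $\half\bigl(m-3(a+b+c)\bigr)$ --- one fewer than required, for every $m$. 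With this patch your route works and in effect re-proves a three-strand analogue of Theorem \ref{thm:ratpretz}. It is worth noting the paper sidesteps all new lattice work: since $P(3,1,3)$ becomes $P(3a,3b,3c)$ by changing only positive crossings, any unknotting of $P(3a,3b,3c)$ with $n\leq1$ induces one for $P(3,1,3)$ with the same $n$, which Theorem \ref{thm:ratpretz} (plus the signature for $n=0$) forbids; your direct approach is heavier but self-contained, while the paper's reduction is shorter and reuses Theorem \ref{thm:ratpretz} verbatim.
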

\begin{proof}
Since $P(3,1,3)$ can be transformed into $P(3a,3b,3c)$ by changing $3(a+b+c)-7$ positive and no negative crossings (to see this, it is more instructive to change negative crossings in the standard diagram of $P(3a,3b,3c)$ in order to reach $P(3,1,3)$), an unknotting sequence for $P(3a,3b,3c)$ with $n\leq1$ would induce an unknotting sequence for $P(3,1,3)$ with the same value of $n$, which is impossible by Theorem \ref{thm:ratpretz}. If $u=1$ then obviously $n\leq1$. This rules out the case $u=1$.

Assume that $u=2$. By Lemma \ref{lem:jompretz2}, $s\equiv1$ modulo $2$. Since $0\leq p\leq2$, Proposition \ref{prp:traczyk} tells us that $p\geq1$, so that $n\leq1$. As in the previous paragraph, Theorem \ref{thm:ratpretz} rules out an unknotting sequence for $P(3a,3b,3c)$ with $n\leq1$.

In the case where $a=b=1$, we have an explicit unknotting sequence of three crossing changes (Figure \ref{fig:333cunknot}), so $u=3$.

\begin{figure}\centering\includegraphics*{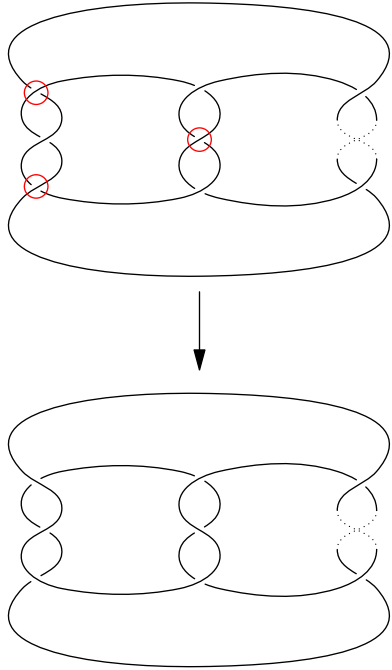}\caption{An unknotting sequence for $P(3,3,3c)$}\label{fig:333cunknot}\end{figure}
\end{proof}

\bibliography{maths}
\end{document}